\newtheorem{theorem}{Theorem}
\newtheorem{lemma}{Lemma}
\newtheorem{corollary}{Corollary}
\newcommand{\R}{\mathbb R}
\newcommand{\PP}{\mathbb P}
\newcommand{\E}{\mathbb E}
\newcommand{\K}{\mathcal K_d}
\title{\textbf{A universal deviation inequality for random polytopes}}
\author{Victor-Emmanuel Brunel\\
   CREST, Paris (France) - University of Haifa, Haifa (Israel)\\
   \texttt{victor.emmanuel.brunel@ensae-paristech.fr}}
\date{}
\begin{document}

\maketitle

\begin{abstract}
    We consider the convex hull of a finite sample of i.i.d. points uniformly distributed in a convex body in $\R^d$, $d\geq 2$. We prove an exponential deviation inequality, which leads to rate optimal upper bounds on all the moments of the missing volume of the convex hull, uniformly over all convex bodies of $\R^d$, with no restriction on their volume, location in the space and smoothness of the boundary.
\end{abstract}

\smallskip
\noindent \textbf{Keywords.} convex body, convex hull, metric entropy, random polytope

\section{Introduction}

    Probabilistic properties of random polytopes have been studied extensively in the literature in the last fifty years. Consider a convex body $K$ in $\R^d$, and a set of $n$ i.i.d. random points uniformly distributed in $K$. The convex hull of these random points is a random polytope. Its number of vertices and its missing volume, i.e., the volume of its complement in $K$, have been first analyzed in the seminal work of Rényi and Sulanke \cite{RenyiSulanke63,RenyiSulanke64}. They derived the asymptotics of the expected missing volume in the case $d=2$, when $K$ is supposed to be either a polygon with a given number of vertices, or a convex set with smooth boundary. More recently, considerable efforts were devoted to understanding the behavior of the expected missing volume. Thus, several particular examples of $K$ were studied, including a $d$-dimensional simple polytope\footnote{A $d$-dimensional simple polytope is a convex polytope such that each of its vertices is adjacent to exactly $d$ edges.} \cite{AffentrangerWieacker91}, a $d$-dimensional polytope \cite{BaranyBuchta93} and a $d$-dimensional Euclidean ball \cite{BuchtaMuller84}. In \cite{Groemer74}, it is shown that the expected missing volume is maximal when $K$ is an ellipsoid, see also the references therein. B\'ar\'any and Larman \cite{BaranyLarman88} showed that if $K$ has volume one, then the expected missing volume has the same asymptotic behavior as the volume of the $(1/n)$-wet part of $K$, defined as the union of all caps of $K$ (a cap being the intersection of $K$ with a half space) of volume at most $1/n$. This reduces the initial probabilistic problem to computation of such a deterministic volume, which is a specific analytic problem that was extensively studied. When $K$ has a smooth boundary, a key point was the introduction of the affine surface area, see \cite{SchuttWerner90,Schutt93}, which leads to the result that the expected missing volume is of the order $n^{-2/(d+1)}$. When $K$ is a polytope, it is of the order $(\ln n)^{d-1}/n$ \cite{BaranyLarman88}. In addition, \cite{BaranyLarman88} proves that the expected missing volume, in dimension $d$, is minimal for simple polytopes, and maximal for ellipsoids. As a conclusion, the properties of the expected missing volume are now very well-understood. Much less is known about its higher moments and deviation probabilities. In particular, using a jackknife inequality for symmetric functions of $n$ random variables, Reitzner \cite{Reitzner03} proved that if $K$ is a $d$-dimensional smooth convex body, the variance of the missing volume is bounded from above by $n^{-(d+3)/(d+1)}$, and he conjectured that this is the actual order of magnitude for the variance. In addition, he proved that the second moment of the missing volume is exactly of the order $n^{-4/(d+1)}$, with explicit constants in terms of the affine surface area of $K$.
    Vu \cite{Vu05} obtained deviation inequalities for general convex bodies of volume one, involving quantities such as the volume of the wet part, and derived precise deviation inequalities in the cases when $K$ is a polytope, and when it has a smooth boundary. These inequalities involve constants which depend on $K$ in an unknown way. The main tools are martingale inequalities and, as a consequence, upper bounds on the moments of the missing volume are proved, again with implicit constants depending on $K$. Let $V_n$ stand for the missing volume of the convex hull of $n$ i.i.d. points uniformly distributed in a convex body $K$. If $K$ has a smooth boundary and is of volume one, Vu \cite{Vu05} showed the existence of positive constants $c$ and $\alpha$, which depend on $K$, such that for any $\lambda\in \left(0,(\alpha/4)n^{-\frac{(d-1)(d+3)}{(d+1)(3d+5)}}\right]$, the following holds:
    $$\PP\left[|V_n-\E[V_n]|\geq\sqrt{\alpha\lambda n^{-\frac{d+3}{d+1}}}\right]\leq 2\exp(-\lambda/4)+\exp\left(-cn^{\frac{d-1}{3d+5}}\right).$$
    This inequality allows one to derive upper bounds on the variance and on the $q$-th moment of the missing volume, respectively of orders $n^{-(d+3)/(d+1)}$ and $n^{-2q/(d+1)}$, for $q>0$, for a \textit{smooth convex body $K$ of volume one}, up to constant factors depending on $K$ in an unknown way. The aim of the present paper is to derive universal deviation inequalities and upper bounds on the moments of the missing volume, i.e., results with no restriction on the volume and boundary structure of $K$, and with constants which do not depend on $K$. The only assumptions on $K$ are compactness and convexity.

\section{Statement of the problem and notation}

    Let $d\geq 2$ be an integer. We denote by $|\cdot|$ the Lebesgue measure in $\R^d$, $\rho$ the Euclidean distance in $\R^d$, $B_d$ the unit Euclidean ball with center $0$, and $\beta_d$ its volume. \\

    If $G\subseteq\R^d$ and $\epsilon>0$, we denote by $G^\epsilon=\{x\in\R^d:\rho(x,G)\leq\epsilon\}$ the closed $\epsilon$-neighborhood of $G$. Here, $\displaystyle{\rho(x,G)=\inf_{y\in G}\rho(x,y)}$.\\

    When $G_1$ and $G_2$ are two subsets of $\R^d$, we denote by $G_1\triangle G_2$ their symmetric difference, and the Hausdorff distance between $G_1$ and $G_2$ is defined as:
    \begin{equation*}
        d_H(G_1,G_2)=\inf\{\epsilon>0:G_1\subseteq G_2^\epsilon, G_2\subseteq G_1^\epsilon\}.
    \end{equation*}
    For brevity, we call a convex body a compact and convex subset of $\R^d$ with positive Lebesgue measure. We denote by $\K$ the class of all convex bodies in $\R_d$, and by $\K^1$ the set of all convex bodies that are included in $B_d$. For a given $K\in\K$, consider a sample of $n$ i.i.d. random points $X_1,\ldots,X_n$, uniformly distributed in $K$. We denote by $\hat K_n$ the convex hull of $X_1,\ldots,X_n$. This is a random polytope whose missing volume is denoted by $V_n$, i.e., $V_n=|K\backslash\hat K_n|$. We denote respectively by $\PP_K$ and $\E_K$ the joint probability measure of $(X_1,\ldots,X_n)$ and the corresponding expectation operator. We are interested in deviation inequalities for $V_n$, i.e., in bounding from above the probability $$\PP_K[V_n>\epsilon|K|],$$
    for $\epsilon>0$. This yields, as a consequence, upper bounds for the moments $\E_K[V_n^q], q>0$. In order to obtain a deviation inequality, we use the metric entropy of the class $\K$. We first prove that it is sufficient to obtain a deviation inequality for $K\in\K^1$, by a scaling argument. The deviation inequality that we prove is uniform on the class $\K^1$, hence it is of much interest in a statistical framework. If one aims to recover $K$ from the observation of the sample points $X_1,\ldots,X_n$, using $\hat K_n$ as an estimator, the risk, measured in terms of the Nikodym distance (defined as the Lebesgue measure of the symmetric difference), can be bounded from above uniformly on $\K$, with no assumption on the volume, boundary structure and location in $\mathbb R^d$ of $K$.

\section{Deviation inequality for random polytopes}

        \begin{theorem}\label{DevIneq}
            There exist two positive constants $C_1$ and $C_2$, which depend on $d$ only, such that:
            \begin{equation}
                \label{Theorem1} \sup_{K\in\K}\PP_K\left[n\left(\frac{|K\backslash\hat K_n|}{|K|}-C_2n^{-2/(d+1)}\right)>x\right] \leq C_1e^{-x/\beta_d}, \forall x>0.
            \end{equation}
        \end{theorem}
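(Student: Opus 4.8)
The plan is to combine an affine (John-position) reduction, a single deterministic geometric inclusion, and Bronstein's metric entropy estimate for convex bodies, balancing the entropy cost against a large-deviation rate precisely at the scale $n^{-2/(d+1)}$.

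\textbf{Reduction.} Since the ratio $|K\backslash\hat K_n|/|K|$ is invariant under invertible affine maps (such a map carries the uniform sample on $K$ to a uniform sample on its image and multiplies both volumes by the same Jacobian), I would first put $K$ in John's position and rescale, so that it suffices to prove the inequality of Theorem~\ref{DevIneq} for bodies $K$ with $c_dB_d\subseteq K\subseteq B_d$ for a dimensional constant $c_d>0$. In particular $c_d^d\beta_d\le|K|\le\beta_d$, and the perimeter of any convex $L\subseteq B_d$ is at most that of $B_d$, namely $d\beta_d$. This keeps us inside $\K^1$ while ruling out degenerate, arbitrarily flat bodies, for which the volume/Hausdorff comparison used below would break down.

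\textbf{Geometric inclusion.} Fix $\epsilon>0$. On the event $\{|K\backslash\hat K_n|>\epsilon|K|\}$ the polytope $\hat K_n$ is itself a convex body contained in $K$, containing all the sample points, and with complement of relative volume exceeding $\epsilon$, so that
$$\{|K\backslash\hat K_n|>\epsilon|K|\}\subseteq\Big\{\exists\,L\in\K,\ L\subseteq K,\ |K\backslash L|>\epsilon|K|,\ X_1,\dots,X_n\in L\Big\}.$$
The idea is then to discretize the continuum family of possible realizations of $\hat K_n$ by a deterministic net. Let $\{L_1,\dots,L_N\}$ be a $\delta$-net, in the Hausdorff distance $d_H$, of the convex bodies contained in $B_d$; Bronstein's theorem gives $\log N\le c_d\,\delta^{-(d-1)/2}$. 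Choosing the net element $L_i$ closest to $\hat K_n$ one has $\hat K_n\subseteq L_i^\delta$, hence all sample points lie in $A_i:=L_i^\delta\cap K$. Since $L_i^\delta\subseteq(\hat K_n)^{2\delta}$, the set $L_i^\delta\backslash\hat K_n$ is a thin shell whose volume is bounded, via the perimeter estimate, by $c_d'\delta$, so $|K\backslash L_i^\delta|\ge\epsilon|K|-c_d'\delta$ and therefore $|A_i|/|K|\le 1-\epsilon+c_d''\delta$ (using the lower bound on $|K|$). The probability that the $n$ i.i.d.\ uniform points all fall in the fixed set $A_i$ is then at most $(1-\epsilon+c_d''\delta)^n\le e^{-(\epsilon-c_d''\delta)n}$, and a union bound over the net yields
$$\PP_K\big[|K\backslash\hat K_n|>\epsilon|K|\big]\le\exp\!\Big(c_d\,\delta^{-(d-1)/2}-(\epsilon-c_d''\delta)n\Big).$$

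\textbf{Optimization and main obstacle.} Taking $\delta$ a small fixed multiple of $\epsilon$ reduces the exponent to the form $c\,\epsilon^{-(d-1)/2}-\tfrac12\epsilon n$; substituting $\epsilon=C_2n^{-2/(d+1)}+x/n$ makes both the entropy term and the ``mean'' part of the rate of order $n^{(d-1)/(d+1)}$, and choosing $C_2$ large enough (depending only on $d$) forces their difference to be nonpositive, leaving an exponent bounded by $-x/\beta_d$, the constant $\beta_d$ recording the volume bound $|K|\le\beta_d$ together with the perimeter/Hausdorff comparison. I expect the main obstacle to be exactly this balancing: one must verify that $n^{-2/(d+1)}$ is precisely the scale at which Bronstein's entropy exponent $\delta^{-(d-1)/2}$ and the large-deviation rate $\epsilon n$ cross, and that the surplus $x/n$ in $\epsilon$ is transmitted cleanly into the exponent as $-x/\beta_d$, uniformly in $K$. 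The supporting estimates — the affine/John reduction, the shell-volume bound, and the exact form of Bronstein's inequality — are the technical ingredients to be assembled to make this rigorous, but the crux is the choice of $C_2$ and the matching of the two competing exponents.
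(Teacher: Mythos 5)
Your proposal is correct and follows essentially the same route as the paper: an affine reduction placing the body inside $B_d$ with volume bounded below (the paper uses the circumscribed ellipsoid of Lemma~\ref{ellipsoid} where you use John's position), Bronshtein's entropy bound $\log N\leq c_d\delta^{-(d-1)/2}$, the Steiner-formula shell estimate of Lemma~\ref{Lemma2}, a union bound over the net exploiting $\PP[X_1,\ldots,X_n\in A]=(|A|/|K|)^n$, and the same balancing of $\delta\sim n^{-2/(d+1)}$ against $\varepsilon=C_2 n^{-2/(d+1)}+x/n$. The only (harmless) deviations are that you approximate only $\hat K_n$ by a net element, whereas the paper also approximates $K'$ and passes through $|G_{j^*}\backslash G_{\hat j}|$, and that your bookkeeping delivers an exponent $-c_d x$ for some dimensional constant $c_d$ rather than the paper's specific $-x/\beta_d$, a cosmetic difference at the level of constants.
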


        Theorem \ref{DevIneq} involves constants which depend at least exponentially on the dimension $d$. This seems to be the price for getting a uniform deviation inequality on $\K$. Note that the missing volume is normalized here by the volume of $K$. Theorem \ref{DevIneq} may be refined by normalizing the missing volume by another functional of $K$, which could be expressed in terms of the affine surface area of $K$, as in \cite{Schutt94} where only the first moment of $V_n$ is considered.

        Theorem \ref{DevIneq} allows one to derive upper bounds for all the moments of the missing volume. Indeed, applying Fubini's theorem leads to the following corollary.

        \begin{corollary}\label{Corollary1}
            For every positive number $q$, there exists some positive constant $A_q$, which depends on $d$ and $q$ only, such that
            \begin{equation}
                \label{Cor1} \E_K\left[|K\backslash\hat K_n|^q\right]\leq A_q|K|^q n^{-2q/(d+1)}, \forall K\in\K.
            \end{equation}
        \end{corollary}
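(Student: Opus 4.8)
The plan is to deduce the moment bound from the uniform tail estimate of Theorem~\ref{DevIneq} by the layer-cake (tail-integration) formula, which is the concrete meaning of ``applying Fubini''. Writing $W=|K\backslash\hat K_n|/|K|\in[0,1]$ for the normalized missing volume, I note that $|K|$ is deterministic, so $\E_K[|K\backslash\hat K_n|^q]=|K|^q\,\E_K[W^q]$, and it suffices to bound $\E_K[W^q]$ by a constant depending only on $d$ and $q$ times $n^{-2q/(d+1)}$, uniformly over $K\in\K$. Since $W\ge 0$, Tonelli's theorem applied to $W^q=\int_0^\infty q t^{q-1}\mathbf 1\{t<W\}\,dt$ gives
\[
\E_K[W^q]=\int_0^\infty q t^{q-1}\,\PP_K[W>t]\,dt.
\]

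Next I would split this integral at the threshold $a:=C_2 n^{-2/(d+1)}$. On $(0,a]$ I bound $\PP_K[W>t]\le 1$, so this part contributes at most $\int_0^a q t^{q-1}\,dt=a^q=C_2^q n^{-2q/(d+1)}$, which is already of the desired order. On $(a,\infty)$ I apply Theorem~\ref{DevIneq} with $x=n(t-a)>0$ to get $\PP_K[W>t]\le C_1 e^{-n(t-a)/\beta_d}$, uniformly in $K$; substituting $u=t-a$ yields
\[
\int_a^\infty q t^{q-1}\,\PP_K[W>t]\,dt\le q C_1\int_0^\infty (u+a)^{q-1}e^{-nu/\beta_d}\,du.
\]

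To finish I would estimate $(u+a)^{q-1}$ and recognize Gamma integrals. For $0<q\le 1$ the exponent $q-1\le 0$ gives $(u+a)^{q-1}\le u^{q-1}$, whence the right-hand side is at most $qC_1\Gamma(q)(\beta_d/n)^q$. For $q>1$ I would use $(u+a)^{q-1}\le 2^{q-1}(u^{q-1}+a^{q-1})$, producing the two terms $2^{q-1}qC_1\Gamma(q)(\beta_d/n)^q$ and $2^{q-1}qC_1 a^{q-1}\beta_d/n$. In either case the outcome is a finite constant depending only on $d$ and $q$, multiplying either $n^{-q}$ or $a^{q-1}n^{-1}=C_2^{q-1}n^{-2(q-1)/(d+1)-1}$.

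The last step is the comparison of exponents, and it is here that $d\ge 2$ is used: since $2/(d+1)\le 1$ one has $-q\le -2q/(d+1)$, and since $(d-1)/(d+1)\ge 0$ one has $-2(q-1)/(d+1)-1=-2q/(d+1)-(d-1)/(d+1)\le -2q/(d+1)$; as $n\ge 1$, each factor $n^{-q}$ and $n^{-2(q-1)/(d+1)-1}$ is therefore dominated by $n^{-2q/(d+1)}$. Collecting the three contributions gives $\E_K[W^q]\le A_q n^{-2q/(d+1)}$ with $A_q$ depending on $d$ and $q$ only, and multiplying by $|K|^q$ completes the proof. I expect no real obstacle beyond bookkeeping; the one point requiring care is that the constant $C_1$ in Theorem~\ref{DevIneq} is genuinely uniform over $\K$, so that the exponential tail---and hence $A_q$---carries no hidden dependence on $K$, its volume, or the geometry of its boundary.
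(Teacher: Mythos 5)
Your proof is correct and follows essentially the same route as the paper, which establishes Corollary~\ref{Corollary1} precisely by ``applying Fubini's theorem,'' i.e., the tail-integration formula applied to the uniform exponential bound of Theorem~\ref{DevIneq}; your split at $a=C_2 n^{-2/(d+1)}$, the Gamma-integral estimates, and the exponent comparison using $2/(d+1)\leq 1$ are exactly the bookkeeping the paper leaves implicit. The uniformity concern you flag is indeed handled, since $C_1$ and $C_2$ in Theorem~\ref{DevIneq} depend on $d$ only.
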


        Note that no restriction is made on $K$ except for its compactness and convexity. In particular, its boundary may not be smooth, and $K$ may be located anywhere in the space, not necessarily in some given compact set. In this sense, the exponential deviation inequality \eqref{Theorem1} and the inequality on the moments \eqref{Cor1} are universal. Combining this corollary with Corollary 2 of \cite{Brunel13} yields the following result.

        \begin{corollary}\label{Corollary2}
            For every positive number $q$, there exist some positive constants $a_q$ and $A_q$, which depend on $d$ and $q$ only, such that
            $$a_qn^{-2q/(d+1)} \leq \sup_{K\in\K}\E_K\left[\left(\frac{|K\backslash\hat K_n|}{|K|}\right)^q\right]\leq A_q n^{-2q/(d+1)}.$$
        \end{corollary}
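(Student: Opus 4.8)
The upper bound is immediate: dividing inequality \eqref{Cor1} of Corollary \ref{Corollary1} by $|K|^q$ gives $\E_K\left[(|K\backslash\hat K_n|/|K|)^q\right]\leq A_q n^{-2q/(d+1)}$ for every $K\in\K$, and taking the supremum over $K$ preserves this bound. The real work lies in the lower bound, for which it suffices to exhibit, for each $n$, a single convex body whose normalized $q$-th moment is at least $a_q n^{-2q/(d+1)}$. The natural candidate is the Euclidean ball $B_d$: since $|K\backslash\hat K_n|/|K|$ is invariant under affine maps, the ball is (up to affine equivalence) the maximizer of the expected normalized missing volume, by the maximality of ellipsoids recalled in the introduction.

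The first moment is supplied by the external input. Corollary 2 of \cite{Brunel13} provides a constant $a_1>0$, depending on $d$ only, such that, writing $W=|B_d\backslash\hat K_n|/\beta_d$, one has $\E_{B_d}[W]\geq a_1 n^{-2/(d+1)}$. It is this single lower bound that I propagate to all exponents $q>0$. For $q\geq 1$, the convexity of $t\mapsto t^q$ and Jensen's inequality give $\E_{B_d}[W^q]\geq(\E_{B_d}[W])^q\geq a_1^q n^{-2q/(d+1)}$, so the claim holds with $a_q=a_1^q$.

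For $0<q<1$, Jensen points the wrong way, and I would instead interpolate. Fix any $p>1$ (say $p=2$) and set $\alpha=(p-1)/(p-q)$ and $\beta=(1-q)/(p-q)$, so that $\alpha+\beta=1$, $q\alpha+p\beta=1$, and $\alpha,\beta\in(0,1)$. Hölder's inequality then yields $\E_{B_d}[W]=\E_{B_d}\left[W^{q\alpha}W^{p\beta}\right]\leq(\E_{B_d}[W^q])^\alpha(\E_{B_d}[W^p])^\beta$. Bounding the first factor from below by $a_1 n^{-2/(d+1)}$ and the last factor from above by $A_p n^{-2p/(d+1)}$ via Corollary \ref{Corollary1}, and using $p\beta-1=-q\alpha$ to collect the powers of $n$, I solve for $\E_{B_d}[W^q]$ and obtain a bound of the claimed form with $a_q=(a_1 A_p^{-\beta})^{1/\alpha}$.

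The only genuine obstacle is securing the first-moment lower bound uniformly in $n$ with a purely dimensional constant; this is precisely the content I borrow from \cite{Brunel13}, so once that input is in hand the remainder is the elementary Jensen/Hölder interpolation above. A minor point is that if the external bound is stated only asymptotically, the finitely many small values of $n$ are absorbed into $a_q$, since $\E_{B_d}[W^q]$ is strictly positive for every $n\geq d+1$ while $n^{-2q/(d+1)}$ stays bounded.
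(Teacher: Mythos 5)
Your proposal is correct, but it takes a genuinely different route from the paper's. The paper disposes of this corollary in one line: the upper bound is \eqref{Cor1} divided by $|K|^q$, exactly as you do, and the lower bound is imported wholesale from Corollary 2 of \cite{Brunel13}, which supplies the matching bound $a_qn^{-2q/(d+1)}$ for every moment $q>0$ directly. You instead borrow only the first-moment seed $\E_{B_d}[W]\geq a_1n^{-2/(d+1)}$ for the ball (a true and classical fact, consistent with the wet-part asymptotics recalled in the introduction) and bootstrap it to all exponents: Jensen for $q\geq 1$, and for $0<q<1$ the reverse-H\"older interpolation $\E[W]\leq\left(\E[W^q]\right)^{\alpha}\left(\E[W^p]\right)^{\beta}$ with the $p$-th moment controlled from above by Corollary \ref{Corollary1} itself --- a nice twist in which the upper-bound half of the statement becomes an ingredient of the lower-bound half. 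Your exponent bookkeeping checks out: $\alpha+\beta=1$, $q\alpha+p\beta=1$, hence $\left(-2+2p\beta\right)/\alpha=-2q$, so the powers of $n$ collapse correctly and $a_q=\left(a_1A_p^{-\beta}\right)^{1/\alpha}$ depends on $d$ and $q$ only (with $p=2$ fixed). What your route buys is a weaker external input --- only the $q=1$ case of the cited result --- and a self-contained propagation to all moments; what the paper's route buys is brevity, at the cost of leaning on \cite{Brunel13} for every $q$. Two small remarks: your appeal to the extremality of ellipsoids is unnecessary (to lower-bound a supremum over $\K$, any single body achieving the rate suffices, and Groemer's maximality statement concerns bodies of equal volume anyway), and your absorption of small $n$ is sound, since for $n\leq d$ the hull is Lebesgue-null so $W=1$ almost surely, while for $n\geq d+1$ one has $\E_{B_d}[W^q]>0$ because the ball is not a polytope.
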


        In order to prove Theorem \ref{DevIneq}, we first state two lemmas.

        \begin{lemma}\label{ellipsoid}
            Let $K\in\K$. There exists an ellipsoid $E$ in $\R^d$ such that $K\subseteq E$ and $|E|\leq d^d|K|$.
        \end{lemma}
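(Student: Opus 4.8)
The plan is to invoke John's theorem on extremal-volume ellipsoids, working with the ellipsoid of maximal volume inscribed in $K$. Since $K$ is a convex body, hence compact with nonempty interior, a standard compactness argument over the set of ellipsoids contained in $K$ (parametrized by a center and a positive semidefinite matrix, a closed and bounded set once we require containment in $K$) shows that there exists an ellipsoid $E_0$ of maximal volume with $E_0\subseteq K$; its uniqueness is classical but not needed here.

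The crucial ingredient is the dilation inclusion furnished by John's theorem: if $c$ denotes the center of $E_0$, then $K$ is contained in the dilate of $E_0$ by the factor $d$ about $c$, that is, $K\subseteq E$ where $E:=c+d(E_0-c)$. After an affine map sending $E_0$ to the unit ball $B_d$ centered at the origin, this reads $K\subseteq dB_d$, which is precisely John's theorem for general, not necessarily symmetric, convex bodies. I would either cite this directly or reconstruct it from the optimality conditions: maximality of $E_0$ produces contact points $u_1,\dots,u_m\in\partial K\cap\partial B_d$ together with weights $\lambda_i>0$ satisfying $\sum_i\lambda_iu_i=0$ and $\sum_i\lambda_iu_iu_i^\top=I_d$, and a supporting-hyperplane argument at these contact points then forces the inclusion $K\subseteq dB_d$.

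Granting this inclusion, the volume estimate is immediate. By construction $K\subseteq E$, and since $E$ is the dilate of $E_0$ by the factor $d$ we have $|E|=d^d|E_0|$. As $E_0\subseteq K$ gives $|E_0|\leq|K|$, we conclude $|E|=d^d|E_0|\leq d^d|K|$, which is exactly the claimed statement.

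The main obstacle is securing the factor $d$ in John's theorem. For centrally symmetric bodies the optimal dilation factor is only $\sqrt d$, but for general convex bodies the factor $d$ is sharp, with equality essentially attained by the simplex, and obtaining it genuinely requires the contact-point decomposition above rather than any elementary symmetrization. Everything else, namely the existence of the extremal ellipsoid and the final volume computation, is routine.
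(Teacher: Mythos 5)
Your proof is correct: the inclusion $K\subseteq c+d(E_0-c)$ is precisely John's theorem for general (non-symmetric) convex bodies, and the volume bound $|E|=d^d|E_0|\leq d^d|K|$ then follows exactly as you state. The paper offers no proof of its own, simply citing \cite{Leichtweiss59} and \cite{HugSchneider07} for this classical extremal-ellipsoid fact, so your argument is essentially the standard route behind those references.
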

        Proof of Lemma \ref{ellipsoid} can be found in \cite{Leichtweiss59} and \cite{HugSchneider07}.
        The second lemma is based on the Steiner formula for convex bodies. It shows that on $\K^1$, the Nikodym distance is bounded from above by the Hausdorff distance, up to some positive constant.
        \begin{lemma}\label{Lemma2}
            There exists some positive constant $\alpha_1$ which depends on $d$ only, such that
            $$|G\triangle G'|\leq \alpha_1 d_H(G,G'), \forall G,G'\in\K^1.$$
        \end{lemma}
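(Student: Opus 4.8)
The plan is to control each half of the symmetric difference by a tubular neighbourhood and then invoke the Steiner formula. Write $\delta=d_H(G,G')$. For every $\epsilon>\delta$ the definition of the Hausdorff distance gives $G\subseteq (G')^\epsilon$ and $G'\subseteq G^\epsilon$, and since the neighbourhoods are closed these inclusions persist at $\epsilon=\delta$. Consequently $G\setminus G'\subseteq (G')^\delta\setminus G'$ and $G'\setminus G\subseteq G^\delta\setminus G$, so that
\begin{equation*}
|G\triangle G'|\leq \left(|(G')^\delta|-|G'|\right)+\left(|G^\delta|-|G|\right).
\end{equation*}

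Next I would use that $C^\delta=C+\delta B_d$ (Minkowski sum) for any convex body $C$, and apply the Steiner formula, which expresses the volume of such a sum as a polynomial in $\delta$: $|C^\delta|-|C|=\sum_{j=1}^d\binom{d}{j}W_j(C)\delta^j$, where $W_1(C),\ldots,W_d(C)$ denote the quermassintegrals of $C$. These functionals are monotone under inclusion, and for $C\in\K^1$ one has $C\subseteq B_d$, hence $W_j(C)\leq W_j(B_d)=\beta_d$ for every $j$ (the value on the ball being read off from $|(1+\delta)B_d|=(1+\delta)^d\beta_d$). Adding the two contributions yields
\begin{equation*}
|G\triangle G'|\leq 2\beta_d\sum_{j=1}^d\binom{d}{j}\delta^j=2\beta_d\left((1+\delta)^d-1\right).
\end{equation*}

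The only remaining point is that this bound is polynomial rather than linear in $\delta$, so I would finish with an elementary case distinction exploiting the boundedness of $\K^1$. If $\delta\leq 1$, then $\delta^j\leq\delta$ for each $j\geq 1$, whence $(1+\delta)^d-1\leq(2^d-1)\delta$ and the claim follows with $\alpha_1=2\beta_d(2^d-1)$. If $\delta>1$, then since $G,G'\subseteq B_d$ one trivially has $|G\triangle G'|\leq\beta_d\leq\beta_d\,\delta\leq\alpha_1\delta$. The main (and essentially only) obstacle is the nonlinearity of the Steiner polynomial away from $\delta=0$; the device that overcomes it is precisely the restriction to $\K^1$, which caps both the symmetric-difference volume and the Hausdorff distance and so renders the large-$\delta$ regime linear for free, leaving the genuine content to the small-$\delta$ Steiner estimate.
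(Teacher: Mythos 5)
Your proof is correct and follows essentially the same route as the paper: both bound $|G\triangle G'|$ by the two tube volumes $|G^\delta\setminus G|+|(G')^\delta\setminus G'|$, apply the Steiner formula with coefficients monotone under inclusion (your explicit quermassintegrals $W_j$ are the paper's $L_j$, bounded by their values on $B_d$), and linearize the resulting polynomial in $\delta$ using the boundedness of $\K^1$. The only cosmetic difference is that the paper uses $\delta\leq 2$ directly to get $\delta^j\leq 2^{j-1}\delta$, whereas you split into the cases $\delta\leq 1$ and $\delta>1$; both yield a dimensional constant $\alpha_1$.
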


        \begin{proof}

            Let $G\in\K$. Steiner formula (see Section 4.1 in \cite{SchneiderBook}) states that there exist some positive numbers $L_1(G),\ldots,L_d(G)$, such that
            \begin{equation}
                \label{SteinerFormula}|G^\lambda\backslash G|=\sum_{j=1}^d L_j(G)\lambda^j, \lambda\geq0.
            \end{equation}
            Besides the $L_j(G), j=1,\ldots,d$ are increasing functions of $G$. In particular, if $G\in\K^1$, then $L_j(G)\leq L_j(B_d)$. \\
            Let $G,G'\in\K^1$, and let $\lambda=d_H(G,G')$. Since $G$ and $G'$ are included in the unit ball, $\lambda$ is not greater than its diameter, so $\lambda\leq 2$. By definition of the Hausdorff distance, $G\subseteq G'^{\lambda}$ and $G'\subseteq G^{\lambda}$. Hence,
            \begin{align*}
                |G\triangle G'| & = |G\backslash G'|+|G'\backslash G| \leq |G'^{\lambda}\backslash G'|+|G^{\lambda}\backslash G| \\
                & \leq 2\sum_{j=1}^d L_j(B_d)\lambda^j \leq \lambda\sum_{j=1}^d L_j(B_d)2^j.
            \end{align*}
            The Lemma is proved by setting $\alpha_1=\sum_{j=1}^d L_j(B_d)2^j$.

        Note that since $\delta\leq 1$, Steiner formula \eqref{SteinerFormula} implies, for $G\in\K^1$, that
        \begin{equation}
            \label{Steiner}|G^\delta\backslash G|\leq \alpha_2\delta,
        \end{equation}
        where $\alpha_2=\sum_{j=1}^d L_j(B_d)$.
        \end{proof}

    \paragraph{Proof of Theorem \ref{DevIneq}}

        This proof is inspired by Theorem 1 in \cite{KST}, which derives an upper bound on the risk of a convex hull type estimator of a convex function.
        Let $K\in\K$. Let $E$ be an ellipsoid which satisfies the properties of Lemma \ref{ellipsoid}, and $T$ an affine transform in $\R^d$ which maps $E$ to the unit ball $B_d$. Note that $\beta_d=|\det T| |E|$, so $T$ is invertible.
        Let us denote $K'=T(K)$ and $X'_i=T(X_i), i=1,\ldots,n$. Let $\hat K_n'$ be the convex hull of $X'_1,\ldots,X'_n$. By the definition of $T$, the following properties hold :
        \begin{enumerate}[(i)]
            \item $K'\in\K^{1}$, \label{prop1}
            \item $X'_1,\ldots,X'_n$ are i.i.d. uniformly distributed in $K'$, \label{prop2}
            \item $T(\hat K_n)=\hat K_n'$. \label{prop3}
        \end{enumerate}
        Furthermore, one has the following:
        \begin{equation}
            \label{IneqEllips}\frac{|K\backslash\hat K_n|}{|K|} = \frac{|K'\backslash\hat K_n'|}{|\det T||K|} = |K'\backslash\hat K_n'|\frac{|E|}{\beta_d|K|} \leq \frac{d^d}{\beta_d}|K'\backslash\hat K_n'|.
        \end{equation}
        Let $\delta=n^{-2/(d+1)}$. A $\delta$-net of $\K^1$, for the Hausdorff distance, is a collection of subsets of $\K^1$ such that for each $G\in\K^1$, there is $G^*$ in this collection of sets which satisfies $d_H(G,G^*)\leq\delta$. Bronshtein \cite{Bronshtein76} showed that there exists a finite $\delta$-net, of cardinality $N_\delta\leq C_1\delta^{-\frac{d-1}{2}}$ for some positive constant $C_1$. Let $\{G_1,\ldots,G_{N_\delta}\}$ be such a $\delta$-net. Let $j^*,\hat j\in\{1,\ldots,N_\delta\}$ be such that:
        \begin{equation*}
            d_H(K',G_{j^*})\leq\delta \mbox{ }\mbox{ }\mbox{ }\mbox{ }\mbox{ and}\mbox{ }\mbox{ }\mbox{ }\mbox{ } d_H(\hat K_n',G_{\hat j})\leq \delta.
        \end{equation*}

        Let $\varepsilon>0$. By \eqref{IneqEllips} and (\ref{prop2}),
        \begin{equation}
            \label{rescaling}\PP_K\left[\frac{|K\backslash\hat K_n|}{|K|}>\varepsilon\right]\leq\PP_{K'}\left[|K'\backslash\hat K_n'|>\frac{\beta_d}{d^d}\varepsilon\right].
        \end{equation}
        Let us recall that if $G, G'$ and $G''$ are three Borel subsets of $\R^d$, then the following triangle inequality holds:
        \begin{equation}
            \label{TI}|G\backslash G''|\leq |G\backslash G'|+|G'\backslash G''|.
        \end{equation}
        Thus, $|K'\backslash\hat K_n'|\leq |K'\backslash G_{j^*}|+|G_{j^*}\backslash G_{\hat j}|+|G_{\hat j}\backslash\hat K_n'|$ and, by the definition of $j^*$ and $\hat j$, and by Lemma \ref{Lemma2} and \eqref{rescaling},
        \begin{equation}
            \label{step1}\PP_K\left[\frac{|K\backslash\hat K_n|}{|K|}>\varepsilon\right] \leq \PP_{K'}\left[|G_{j^*}\backslash G_{\hat j}|>\frac{\beta_d}{d^d}\varepsilon-2\alpha_1\delta\right].
        \end{equation}
        Set $\varepsilon'=\frac{\beta_d}{d^d}\varepsilon-2\alpha_1\delta$. \eqref{step1} implies
        \begin{equation}
            \label{step2}\PP_K\left[\frac{|K\backslash\hat K_n|}{|K|}>\varepsilon\right] \leq \sum_{j=1,\ldots,N_\delta : |G_{j^*}\backslash G_{j}|>\varepsilon'}\PP_{K'}\left[\hat j=j\right].
        \end{equation}
        Let $j\in\{1,\ldots,N_\delta\}$ be fixed, such that $|G_{j^*}\backslash G_{j}|>\varepsilon'$. Recall that $\hat K_n'\subseteq G_{\hat j}^\delta$, and thus if $\hat j=j$, then $X_i'\in G_j^\delta, i=1,\ldots,n$. So,
        \begin{align*}
            \PP_{K'}\left[\hat j=j\right] & \leq \left(\PP_{K'}\left[X_1'\in G_j^\delta\right]\right)^n \\
            & \leq \left(1-\frac{|K'\backslash G_j^\delta|}{|K'|}\right)^n \\
            & \leq \left(1-\frac{1}{\beta_d}(|G_{j^*}\backslash G_j|-|G_{j^*}\backslash K'|-|G_j^\delta\backslash G_j|)\right)^n,
        \end{align*}
        using the triangle inequality \eqref{TI} and the fact that $|K'|\leq \beta_d$.
        Denote by $I_{\varepsilon'}=1$ if $\varepsilon'<\beta_d$, and $0$ otherwise. Continuing \eqref{step2}, and using \eqref{Steiner}, one gets:
        \begin{align*}
            \PP_K\left[\frac{|K\backslash\hat K_n|}{|K|}>\varepsilon\right] & \leq \sum_{j=1,\ldots,N_\delta : |G_{j^*}\backslash G_{j}|>\varepsilon'}\left(1-\frac{\varepsilon'}{\beta_d}+\frac{(\alpha_1+\alpha_2)\delta}{\beta_d}\right)^n \\
            & \leq N_\delta \left(1-\frac{\varepsilon'}{\beta_d}+\frac{(\alpha_1+\alpha_2)\delta}{\beta_d}\right)^nI_{\varepsilon'} \\
            & \leq C_1\exp\left(\delta^{-\frac{d-1}{2}}-\frac{\varepsilon n}{\beta_d}+\frac{(3\alpha_1+\alpha_2)\delta n}{\beta_d}\right) \\
            & \leq C_1\exp\left(\alpha_3\delta n-\frac{\varepsilon n}{\beta_d}\right),
        \end{align*}
        where $\alpha_3=1+\frac{3\alpha_1+\alpha_2}{\beta_d}$ is a positive constant which depends on $d$ only (recall that $\delta^{-\frac{d-1}{2}}=\delta n$).
        Finally, by choosing $\varepsilon=\alpha_3\beta_d\delta + x/n$, for any $x>0$, and by setting the constant $C_2=\alpha_3\beta_d$, one gets \eqref{Theorem1}.

\bibliographystyle{plain}
\bibliography{Biblio}

\end{document}